\theoremstyle{plain}
\newtheorem{theorem}{Theorem}[section]
\newtheorem{proposition}[theorem]{Proposition}
\newtheorem{lemma}[theorem]{Lemma}
\theoremstyle{definition}
\theoremstyle{remark}
\newtheorem{remark}[theorem]{Remark}
\DeclareMathOperator{\SE}{SE}
\DeclareMathOperator{\SO}{SO}
\DeclareMathOperator{\Range}{Range}
\DeclareMathOperator{\Ker}{Ker}
\numberwithin{equation}{section}
\title{Cohomological Equation for Robotic Screw Motions on the Lie Group \mbox{$\SE(3)$}}
\author{Amanze C. Egere}
\address{Department of Robotics/Interdisciplinary Engineering, Purdue University}
\email{aegere@purdue.edu}
\date{\today}
\subjclass[2020]{37A20, 22E30, 43A80}
\keywords{Cohomological equation, screw motion, Euclidean motion group, harmonic analysis, Lie groups}
\begin{document}

\begin{abstract}
We study the cohomological equation associated with screw motions on the Euclidean
motion group $\SE(3)$.
Working on a (smooth) manifold  $M := \mathbb{T}^3 \times \SO(3)$ (or phase space), we combine Fourier
analysis in the translational variables with Peter-Weyl theory on $\SO(3)$ to
reduce the equation to a family of finite-dimensional linear cocycle (or transport system) along frequency orbits induced by the rotational component.
In the case of finite-order rotations, solvability is governed by explicit
finite-dimensional linear obstructions encoded by monodromy operators.
An explicit screw motion along the $z$-axis illustrates the resulting resonance
conditions.
Since rigid motions on $\SE(3)$ arise naturally as configuration spaces in robotic
kinematics, the results provide a mathematically precise description of
obstruction phenomena relevant to robotic rigid--body motions.
\end{abstract}

\maketitle

\section{Introduction}

Cohomological equations play a central role in smooth dynamics and rigidity theory,
encoding obstructions to conjugacy, regularity, and deviation phenomena.
Given a diffeomorphism $\gamma$ acting on a manifold $M$, the equation
\begin{equation}
f \circ \gamma - f = g
\label{eq:cohom}
\end{equation}
carries fundamental information about invariant distributions and the behavior of ergodic averages. The systematic study of such equations dates back to the foundational work of
Gottschalk and Hedlund~\cite{GottschalkHedlund1955} in topological dynamics.
In the smooth setting, connections with rigidity and global hypoellipticity
were uncovered by Greenfield and Wallach~\cite{GreenfieldWallach1973}, and later
developed by Forni~\cite{Forni1997}, who identified invariant distributions as
the primary obstructions to solvability.

For abelian groups and nilmanifolds, cohomological equations allow for strong reductions based on Fourier analysis and representation theory. Specifically, when considering translations on tori and nilrotations on the Heisenberg group, equation~\eqref{eq:cohom} simplifies to concrete algebraic conditions on the Fourier coefficients, which expose the exact resonance obstructions and permit the derivation of sharp regularity results.
The purpose of this paper is to establish an analogous reduction for screw motions on the Euclidean motion group  $\SE(3)$.
Screw motions combine rotation about a given axis with translation along the same axis and constitute a fundamental class of rigid motions in geometry and mechanics.
Rigid motions on $\SE(3)$ arise naturally as configuration spaces in robotic
kinematics. This provides additional motivation for the present analysis.

Although the harmonic analysis in $\SE(3)$ is classical, the corresponding cohomological
equations for rigid motions involving rotations have received relatively
little attention. Unlike the nilpotent case, the presence of rotations introduces two new features:
the transport of Fourier frequencies along rotational orbits and the appearance
of matrix-valued cocycles arising from the representation theory of $\SO(3)$.
In this article, we show that, despite these complications, the cohomological equation retains a
tractable algebraic structure and, in the case of finite-order rotations, it reduces
to finite-dimensional linear systems with explicit solvability criteria.
An explicit axial screw motion is analyzed in Section~\ref{sec:zaxis}, where the
obstruction mechanism reduces to a transparent resonance condition depending
only on the translational pitch and axial Fourier frequencies.

\section{Background and Related Work}

The study of cohomological equations has a long history in dynamical systems.
For translations on tori, solvability and regularity are governed by Diophantine
conditions on the translation vector, reflecting the interaction between
arithmetic properties and small--divisor phenomena.

On homogeneous spaces, and in particular on nilmanifolds, representation--theoretic
methods provide a powerful framework for analyzing cohomological equations.
In this setting, solvability can often be reduced to algebraic conditions on
irreducible components of the regular representation.
The obstruction theory arising from invariant distributions was developed
systematically in the study of unipotent flows and nilflows, where it plays
a central role in rigidity phenomena and deviation estimates.

A comprehensive representation--theoretic treatment of cohomological equations
on nilmanifolds, including the Heisenberg group, was given by Flaminio and
Forni~\cite{FlaminioForni2007}.
Their work shows that solvability reduces to finite--dimensional obstruction
spaces associated with irreducible unitary representations, extending earlier
analyses of invariant distributions for unipotent flows~\cite{FlaminioForni2003}.
Related techniques also appear in the study of cocycle rigidity and smooth
conjugacy problems over group actions~\cite{KatokSpatzier1996}.

More recently, Egwe~\cite{Egwe2025} studied cohomological equations on the
discrete Heisenberg group, providing explicit solvability conditions in a
noncommutative setting that shares structural features with the systems
considered here.

The Euclidean motion group $\SE(3)$ occupies a different position in the Lie
group hierarchy, being neither abelian nor nilpotent.
While harmonic analysis on $\SE(3)$ is classical, the study of cohomological
equations for rigid motions involving rotations has received comparatively
little attention.
Screw motions therefore provide a natural and geometrically meaningful test
case in which explicit analysis remains possible.

General background on flows on homogeneous spaces may be found in
Auslander, Green, and Hahn~\cite{AuslanderGreenHahn1963}.

\section{The Group $\SE(3)$ and a Compact Phase Space}

The Euclidean motion group in three dimensions is the semidirect product
\[
\SE(3) = \mathbb{R}^3 \rtimes \SO(3),
\]
with group law
\[
(x,R)\cdot(y,S) = (x + Ry, RS),
\qquad (x,R)^{-1} = (-R^{T}x, R^{T}).
\]

To place the dynamics in a compact setting compatible with Fourier and representation-theoretic methods, we work on
\[
M := \mathbb{T}^3 \times \SO(3),
\qquad \mathbb{T}^3 = \mathbb{R}^3 / \mathbb{Z}^3,
\]
endowed with Haar probability measure
\[
\mu = m_{\mathbb{T}^3} \otimes m_{\SO(3)}.
\]

Throughout, we assume
\[
R_0 \in \SO(3) \cap GL(3,\mathbb{Z}),
\]
so that the map $x \mapsto R_0 x$ descends to an automorphism of the torus
$\mathbb{T}^3$.
This condition implies that $R_0$ has finite order and acts by finite
permutations on Fourier frequencies.
As a consequence, the induced action decomposes frequency space into finite
orbits, allowing the cohomological equation to be reduced to finite--dimensional
linear systems.
For general harmonic analysis and probabilistic methods on $\SE(3)$, see
Chirikjian~\cite{Chirikjian2011}.
.

\section{The Discrete Dynamical System}

Let $a = (t,R_0) \in \SE(3)$ be a screw motion.
Left translation by $a$ defines a diffeomorphism
\[
\gamma : M \to M,
\qquad
\gamma(x,R) = (t + R_0 x,\; R_0 R).
\]
Since translations on $\mathbb{T}^3$ and left multiplication on $\SO(3)$ preserve Haar measure, $\gamma$ preserves $\mu$.

\section{The Cohomological Equation}

We study the cohomological equation
\begin{equation}
f \circ \gamma - f = g,
\label{eq:cohomological}
\end{equation}
for functions $f,g \in L^2(M)$.

\begin{lemma}
A necessary condition for solvability of \eqref{eq:cohomological} is
\[
\int_M g \, d\mu = 0.
\]
\end{lemma}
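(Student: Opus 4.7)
The plan is to integrate the cohomological equation against the invariant measure $\mu$ and cancel the two $f$-terms using the $\gamma$-invariance of $\mu$ recorded in Section 4. The argument is a direct consequence of the measure-preserving property and does not yet require any harmonic-analytic input.

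Concretely, I would start by assuming $f \in L^2(M)$ satisfies $f \circ \gamma - f = g$. Since $\mu$ is a probability measure on the compact manifold $M$, we have $L^2(M) \subset L^1(M)$, so $f$, $f \circ \gamma$, and $g$ are all $\mu$-integrable. I would then integrate both sides of \eqref{eq:cohomological} over $M$ and apply the change-of-variables identity
\[
\int_M (f \circ \gamma)\, d\mu \;=\; \int_M f\, d(\gamma_* \mu) \;=\; \int_M f\, d\mu,
\]
using $\gamma_* \mu = \mu$ from Section 4. Subtracting the two $f$-integrals immediately yields $\int_M g\, d\mu = 0$.

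I do not anticipate any substantive obstacle. The argument depends only on the product structure $\mu = m_{\mathbb{T}^3} \otimes m_{\SO(3)}$ and on the fact, already established, that translations on $\mathbb{T}^3$ and left multiplication on $\SO(3)$ preserve the respective Haar measures. The Fourier expansion in the $\mathbb{T}^3$-variable and the Peter–Weyl decomposition on $\SO(3)$ advertised in the abstract play no role here; this lemma merely records the single mean-zero constraint that will subsequently be refined into the frequency-localized obstructions governing solvability along rotational orbits.
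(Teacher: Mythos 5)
Your argument is correct and coincides with the paper's proof: both integrate the cohomological equation over $M$, invoke $L^2(M)\subset L^1(M)$ since $\mu$ is a probability measure, and use $\gamma$-invariance of $\mu$ to cancel the two $f$-integrals. No differences worth noting.
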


\begin{proof} Assume that there exists a measurable solution $f$ to \eqref{eq:cohomological}
such that $f,g\in L^{1}(M,\mu)$ (in particular, this holds if $f,g\in L^{2}(M,\mu)$
since $\mu(M)=1$).
Integrating \eqref{eq:cohomological} over $M$ and using $\gamma$--invariance of $\mu$ yields
\[
\int_M (f \circ \gamma - f)\, d\mu = 0.
\]
\end{proof}

\section{Fourier Analysis on $\mathbb{T}^3$ and Peter-Weyl Theory on $\SO(3)$}
We rely on standard harmonic analysis on compact groups and Lie groups;
background on Fourier analysis on $\mathbb{T}^3$ and representation theory
of compact Lie groups may be found in
Knapp~\cite{Knapp2002} and Folland~\cite{Folland1995}. For $k \in \mathbb{Z}^3$, define
\[
e_k(x) := e^{2\pi i\, k \cdot x},
\]
which form an orthonormal basis of $L^2(\mathbb{T}^3)$.
A direct computation shows that
\begin{equation}
e_k(t + R_0 x)
=
e^{2\pi i\, k \cdot t}\, e_{R_0^{T} k}(x).
\label{eq:torus-transport}
\end{equation}

By the Peter-Weyl theorem, the matrix coefficients
\[
\{D^\ell_{mn}(R) : \ell \in \mathbb{N}_0,\; -\ell \le m,n \le \ell\}
\]
form an orthogonal basis of $L^2(\SO(3))$, where $D^\ell$ denotes the irreducible unitary representation of degree $2\ell+1$.
These satisfy
\begin{equation}
D^\ell(R_0 R) = D^\ell(R_0)\, D^\ell(R).
\label{eq:pw-transport}
\end{equation}

Every $f \in L^2(M)$ admits an expansion
\[
f(x,R)
=
\sum_{k \in \mathbb{Z}^3}
\sum_{\ell=0}^{\infty}
\sum_{m,n=-\ell}^{\ell}
f_{k,\ell,mn}\, e_k(x)\, D^\ell_{mn}(R),
\]
and similarly for $g$. See also Folland and Stein~\cite{FollandStein1982} for Hardy space methods on
homogeneous groups.

\section{The Reduced Linear System}

Fix $\ell \ge 0$ and $n \in \{-\ell,\dots,\ell\}$.
Define vector-valued Fourier coefficients
\[
F_{\ell,n}(k)
:= \big(f_{k,\ell,mn}\big)_{m=-\ell}^{\ell}
\in \mathbb{C}^{2\ell+1},
\qquad
G_{\ell,n}(k)
:= \big(g_{k,\ell,mn}\big)_{m=-\ell}^{\ell}.
\]
Let
\[
U_\ell := D^\ell(R_0).
\]

\begin{proposition}[Reduced transport equation]\label{prop:reduced-transport}

The cohomological equation \eqref{eq:cohomological} is equivalent to the system
\begin{equation}
e^{2\pi i (R_0 k)\cdot t}\, U_\ell\, F_{\ell,n}(R_0 k)
-
F_{\ell,n}(k)
=
G_{\ell,n}(k),
\qquad k \in \mathbb{Z}^3,
\label{eq:reduced-system}
\end{equation}
for every $\ell\ge 0$ and $n\in\{-\ell,\dots,\ell\}$, where $U_\ell=D^\ell(R_0)$.
\end{proposition}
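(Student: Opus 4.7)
The plan is to expand both sides of \eqref{eq:cohomological} in the joint Fourier--Peter--Weyl basis $\{e_k(x)\,D^\ell_{mn}(R)\}_{k,\ell,m,n}$ of $L^{2}(M)$ and to match coefficients. The two key algebraic inputs are the transport identities \eqref{eq:torus-transport} and \eqref{eq:pw-transport}, together with the observation that $k\mapsto R_0k$ is a bijection of $\mathbb{Z}^3$ by the standing hypothesis $R_0\in\SO(3)\cap GL(3,\mathbb{Z})$, so that the lattice sum may be reindexed freely.

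First, I would substitute the expansion of $f$ into $f\circ\gamma$ and apply \eqref{eq:torus-transport} and \eqref{eq:pw-transport} to each basis element. This yields
\[
f(\gamma(x,R))=\sum_{k,\ell,m,n}f_{k,\ell,mn}\,e^{2\pi i\,k\cdot t}\,e_{R_0^{T}k}(x)\sum_{p}D^\ell_{mp}(R_0)\,D^\ell_{pn}(R).
\]
The shifted torus index suggests the substitution $k\mapsto R_0k$ in the lattice sum, which restores the torus basis to $e_k(x)$, converts the phase to $e^{2\pi i(R_0k)\cdot t}$, and shifts the coefficient $f_{k,\ell,mn}$ to $f_{R_0k,\ell,mn}$. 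After swapping the inner summation over the auxiliary index $p$ with the outer sums and regrouping, the coefficient of $e_k(x)\,D^\ell_{mn}(R)$ in $f\circ\gamma$ is identified as the $m$-th entry of $e^{2\pi i(R_0k)\cdot t}\,U_\ell\,F_{\ell,n}(R_0k)$, with $U_\ell=D^\ell(R_0)$ acting on the finite-dimensional column indexed by the running rotational index.

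Second, matching coefficients of $e_k(x)\,D^\ell_{mn}(R)$ on both sides of \eqref{eq:cohomological} --- which is legitimate by uniqueness of the Fourier--Peter--Weyl decomposition in $L^{2}(M)$ --- produces the scalar identity
\[
e^{2\pi i(R_0k)\cdot t}\bigl[U_\ell\,F_{\ell,n}(R_0k)\bigr]_m-[F_{\ell,n}(k)]_m=[G_{\ell,n}(k)]_m
\]
for every $k\in\mathbb{Z}^3$, $\ell\ge 0$, and $-\ell\le m,n\le\ell$. Assembling the $m$-components (with $\ell$ and $n$ fixed) into a vector in $\mathbb{C}^{2\ell+1}$ yields exactly \eqref{eq:reduced-system}. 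The converse implication is immediate: if \eqref{eq:reduced-system} holds for all triples $(\ell,n,k)$, then $f\circ\gamma-f$ and $g$ have identical Fourier--Peter--Weyl expansions, and Parseval's identity ensures coincidence in $L^{2}(M)$.

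The principal obstacle is purely notational bookkeeping: the two sources of transport --- the translation $k\mapsto R_0k$ in $\mathbb{Z}^3$ and the mixing of the rotational matrix index by $D^\ell(R_0)$ --- must be applied in the correct order so that $U_\ell$ ends up acting on $F_{\ell,n}(R_0k)$ rather than on $F_{\ell,n}(k)$. Performing the lattice reindexing before matching coefficients, and carefully renaming the dummy index $p$ introduced by \eqref{eq:pw-transport}, makes the reduction transparent.
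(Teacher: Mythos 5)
Your proposal is correct and mirrors the paper's own proof essentially step for step: expand $f$ and $g$ in the joint Fourier--Peter--Weyl basis, apply the transport identities \eqref{eq:torus-transport} and \eqref{eq:pw-transport}, reindex the lattice sum via $k\mapsto R_0k$ (the paper writes this as $k'=R_0^Tk$), and match coefficients of $e_k(x)D^\ell_{mn}(R)$ to obtain the vector equation \eqref{eq:reduced-system}. Your closing remark about the order of the two transports is exactly the bookkeeping point the paper's proof is careful about.
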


\begin{proof}
Write the Fourier--Peter--Weyl expansions
\[
f(x,R)=\sum_{k\in\mathbb{Z}^3}\sum_{\ell\ge 0}\sum_{m,n=-\ell}^{\ell}
f_{k,\ell,mn}\, e_k(x)\, D^\ell_{mn}(R),
\]
\[
g(x,R)=\sum_{k\in\mathbb{Z}^3}\sum_{\ell\ge 0}\sum_{m,n=-\ell}^{\ell}
g_{k,\ell,mn}\, e_k(x)\, D^\ell_{mn}(R),
\]
where $e_k(x)=e^{2\pi i k\cdot x}$ and $D^\ell_{mn}$ are matrix coefficients of the irreducible representation $D^\ell$.

Recall that $\gamma(x,R)=(t+R_0x,\;R_0R)$, so
\[
(f\circ\gamma)(x,R)=f(t+R_0x,\;R_0R).
\]
Substituting the expansion of $f$ gives
\begin{align*}
(f\circ\gamma)(x,R)
&=
\sum_{k,\ell,m,n} f_{k,\ell,mn}\,
e_k(t+R_0x)\,
D^\ell_{mn}(R_0R).
\end{align*}

We now use the transport identities on $\mathbb{T}^3$ and $\SO(3)$.
First, by \eqref{eq:torus-transport},
\[
e_k(t+R_0x)=e^{2\pi i k\cdot t}\, e_{R_0^T k}(x).
\]
Second, by \eqref{eq:pw-transport} (i.e.\ $D^\ell(R_0R)=D^\ell(R_0)D^\ell(R)$), we have for matrix coefficients
\[
D^\ell_{mn}(R_0R)=\sum_{m'=-\ell}^{\ell} D^\ell_{m m'}(R_0)\, D^\ell_{m'n}(R).
\]
Therefore,
\begin{align*}
(f\circ\gamma)(x,R)
&=\sum_{k,\ell,m,n} f_{k,\ell,mn}\,
\big(e^{2\pi i k\cdot t}\, e_{R_0^T k}(x)\big)\,
\sum_{m'=-\ell}^{\ell} D^\ell_{m m'}(R_0)\, D^\ell_{m'n}(R)\\
&=\sum_{k,\ell,n}\; e^{2\pi i k\cdot t}\, e_{R_0^T k}(x)\,
\sum_{m'=-\ell}^{\ell}
\left(\sum_{m=-\ell}^{\ell} D^\ell_{m m'}(R_0)\, f_{k,\ell,mn}\right)
D^\ell_{m'n}(R).
\end{align*}

We now re-index the $\mathbb{T}^3$ frequency to express everything in the basis $\{e_k(x)\}$.
Let $k' = R_0^T k$, i.e.\ $k = R_0 k'$ (since $R_0$ is orthogonal).
Then $e_{R_0^T k}(x)=e_{k'}(x)$ and
\[
k\cdot t = (R_0 k')\cdot t.
\]
Hence
\begin{align*}
(f\circ\gamma)(x,R)
&=\sum_{k'\in\mathbb{Z}^3}\sum_{\ell,n}
e^{2\pi i (R_0 k')\cdot t}\, e_{k'}(x)\,
\sum_{m'=-\ell}^{\ell}
\left(\sum_{m=-\ell}^{\ell} D^\ell_{m m'}(R_0)\, f_{R_0 k',\ell,mn}\right)
D^\ell_{m'n}(R).
\end{align*}

Now compare the cohomological equation $f\circ\gamma - f = g$ in the orthogonal basis
$\{e_k(x)D^\ell_{m'n}(R)\}$.
Fix $k\in\mathbb{Z}^3$, $\ell\ge 0$, and $n\in\{-\ell,\dots,\ell\}$.
For each $m'\in\{-\ell,\dots,\ell\}$, the coefficient of $e_k(x)D^\ell_{m'n}(R)$ on the left-hand side equals
\[
e^{2\pi i (R_0 k)\cdot t}\,
\sum_{m=-\ell}^{\ell} D^\ell_{m m'}(R_0)\, f_{R_0 k,\ell,mn}
\;-\;
f_{k,\ell,m'n},
\]
and must equal the corresponding coefficient $g_{k,\ell,m'n}$ of $g$.

Introduce the vector notation
\[
F_{\ell,n}(k) := (f_{k,\ell,mn})_{m=-\ell}^{\ell},
\qquad
G_{\ell,n}(k) := (g_{k,\ell,mn})_{m=-\ell}^{\ell},
\qquad
U_\ell := D^\ell(R_0).
\]
Then the previous componentwise identity is exactly the vector equation
\[
e^{2\pi i (R_0 k)\cdot t}\, U_\ell\, F_{\ell,n}(R_0 k)
-
F_{\ell,n}(k)
=
G_{\ell,n}(k),
\]
which is \eqref{eq:reduced-system}.
Conversely, \eqref{eq:reduced-system} implies equality of all Fourier-Peter-Weyl coefficients of $f\circ\gamma-f$ and $g$, hence \eqref{eq:cohomological}.
\end{proof}

\section{Orbit Reduction and Obstructions}\label{sec:obstructions}

Since $R_0 \in GL(3,\mathbb{Z}) \cap \SO(3)$ has finite order, there exists
$p \ge 1$ such that $R_0^p = I$.
For each $k \in \mathbb{Z}^3$, define the finite orbit
\[
k_j := R_0^{\,j} k,
\qquad j = 0,\dots,p-1.
\]

Let
\[
F_j := F_{\ell,n}(k_j),
\qquad
G_j := G_{\ell,n}(k_j),
\qquad
\alpha_j := e^{2\pi i\, k_{j+1} \cdot t}.
\]
Then \eqref{eq:reduced-system} becomes the cyclic system
\[
\alpha_j\, U_\ell\, F_{j+1} - F_j = G_j,
\qquad j = 0,\dots,p-1,
\]
with indices taken modulo $p$.

Iterating around the orbit yields the closure equation
\[
(A_\ell(k) - I)\, F_0 = B_{\ell,n}(k),
\]
where
\[
A_\ell(k)
=
\left( \prod_{j=0}^{p-1} \alpha_j \right) U_\ell^{\,p},
\qquad
B_{\ell,n}(k)
=
\sum_{r=0}^{p-1}
\left( \prod_{j=0}^{r-1} \alpha_j U_\ell \right) G_r.
\]
The finite--orbit reduction obtained below mirrors the obstruction theory
that appears in the study of cohomological equations for nilflows and
partially hyperbolic systems, where solvability is governed by finite--
dimensional invariant distributions; see, for example,
Forni~\cite{Forni1997} and Giulietti--Liverani~\cite{GiuliettiLiverani2013}.

\begin{theorem}[Finite-orbit solvability and obstructions]
\label{thm:finite-orbit}
The cohomological equation \eqref{eq:cohomological} admits a solution
$f \in L^2(M)$ if and only if
\[
\int_M g\, d\mu = 0
\quad\text{and}\quad
B_{\ell,n}(k) \in \Range(A_\ell(k) - I)
\]
for all $k,\ell,n$.
Equivalently,
\[
\langle B_{\ell,n}(k), v \rangle = 0
\quad
\text{for all }
v \in \Ker(A_\ell(k)^* - I).
\]
\end{theorem}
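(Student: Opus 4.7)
The plan is to combine Proposition~\ref{prop:reduced-transport} with the orbit-closure calculation written down just before the theorem, and then invoke the finite-dimensional Fredholm alternative on $\mathbb{C}^{2\ell+1}$. Necessity of the mean-zero condition $\int_M g\,d\mu=0$ is already the content of the Lemma in Section~5, so only the orbit obstruction requires new work. Since $R_0$ has finite order $p$, the map $k\mapsto R_0 k$ partitions $\mathbb{Z}^{3}$ into finite cyclic orbits of length dividing $p$, and by Proposition~\ref{prop:reduced-transport} the cohomological equation decouples into independent cyclic systems, one for each orbit and each pair $(\ell,n)$.

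Fix one such orbit $\{k_0,\dots,k_{p-1}\}$ with $k_{j+1}=R_0 k_j$ (indices modulo $p$). The reduced system reads $\alpha_j U_\ell F_{j+1}-F_j=G_j$. I would solve it by back-substitution: write $F_j=\alpha_j U_\ell F_{j+1}-G_j$ and iterate once around the orbit; after $p$ steps the right-hand side closes onto $F_0$ and produces exactly the identity $(A_\ell(k)-I)F_0=B_{\ell,n}(k)$ with $A_\ell(k)=\bigl(\prod_{j=0}^{p-1}\alpha_j\bigr)U_\ell^{p}$ and $B_{\ell,n}(k)$ as in the text. Conversely, given any $F_0$ satisfying the closure, the forward recursion $F_{j+1}=\alpha_j^{-1}U_\ell^{*}(F_j+G_j)$ (valid because $U_\ell$ is unitary and $|\alpha_j|=1$) propagates to a consistent assignment along the orbit, and the return condition $F_p=F_0$ reduces precisely to the same closure equation. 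Hence orbit-wise solvability is equivalent to $B_{\ell,n}(k)\in\Range(A_\ell(k)-I)$, and imposing this at every $k$ (and every $(\ell,n)$) yields the first form of the criterion. The orthogonality formulation follows at once from the finite-dimensional identity $\Range(A_\ell(k)-I)=\Ker(A_\ell(k)^{*}-I)^{\perp}$.

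The main obstacle I anticipate is not this algebraic derivation but the passage from orbit-wise solvability to genuine membership in $L^2(M)$. Using $R_0^p=I$ one has $U_\ell^p=I$, so $A_\ell(k)$ collapses to the scalar $\lambda(k):=\prod_{j=0}^{p-1}\alpha_j$ times the identity; resonant orbits (where $\lambda(k)=1$) force $B_{\ell,n}(k)=0$ and leave $F_0$ free, while on non-resonant orbits $F_0$ is uniquely determined with $\|F_0\|\lesssim|\lambda(k)-1|^{-1}\|B_{\ell,n}(k)\|$. Guaranteeing Parseval--Peter--Weyl summability of $f$ therefore requires uniform control of these small denominators in $(k,\ell,n)$, which is a Diophantine issue about the translational part $t$ that is logically separate from the algebraic obstructions stated in the theorem; I would handle it either by reading the theorem as a statement of the formal obstructions or by imposing a standard Diophantine hypothesis on $t$ whenever the $L^2$-regularity has to be made quantitative.
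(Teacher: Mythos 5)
Your algebraic derivation mirrors the paper's proof almost step for step: you use Proposition~\ref{prop:reduced-transport} to decouple by $(\ell,n)$ and orbit, obtain the closure equation $(A_\ell(k)-I)F_0=B_{\ell,n}(k)$ by back-substitution, argue the converse by forward reconstruction using unitarity of $\alpha_jU_\ell$ together with the return condition $F_p=F_0$, and finish with the finite-dimensional identity $\Range(T)=\Ker(T^*)^\perp$. So on the algebraic side the two proofs are essentially identical.

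The concern you flag in your last paragraph is, however, a genuine gap in the theorem as stated, and the paper's own proof does not close it. After Step~2 the paper passes directly from ``the closure equation is solvable on every orbit'' to ``the cohomological equation is solvable in $L^2(M)$'' without verifying that the reconstructed coefficients are square-summable. Your reduction $U_\ell^p=I$, hence $A_\ell(k)=\lambda(k)\,I$ with $\lambda(k)=e^{2\pi i\,m(k)\cdot t}$ for $m(k)=\sum_{j=0}^{p-1}R_0^jk\in\mathbb{Z}^3$, makes the issue transparent: $m(k)$ ranges over lattice points on the rotation axis, so if the axial component of $t$ is irrational then $|\lambda(k)-1|$ is never zero but is infinitely often arbitrarily small. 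One can then choose $g\in L^2(M)$, supported say on $\ell=0$ and on one representative per orbit, with $\int g\,d\mu=0$ and $\sum\|G\|^2<\infty$, yet $\sum\|G\|^2/|\lambda-1|^2=\infty$; every range condition holds vacuously (since $A_\ell(k)-I$ is invertible) but no $L^2$ solution exists, so the ``if'' direction of the theorem fails. The theorem is therefore correct only under an additional hypothesis controlling the divisors, e.g.\ a Diophantine or rationality assumption on the axial component of $t$, or with ``solution'' read formally orbit by orbit rather than in $L^2(M)$ (as the paper's own $z$-axis example is careful to do when it asserts a unique solution ``along each frequency orbit''). Your proposed resolutions are both reasonable, and your observation that the divisor depends on $k$ only through $m(k)\cdot t$ and not on $(\ell,n)$ is a useful sharpening that the paper leaves implicit.
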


\begin{proof}
We have already shown that $\int_M g\,d\mu=0$ is necessary for solvability of
\eqref{eq:cohomological}. We therefore focus on the Fourier-Peter-Weyl
reduction and the orbit-by-orbit criterion.

Fix $\ell\ge 0$ and $n\in\{-\ell,\dots,\ell\}$. By Proposition
\ref{eq:reduced-system} (Reduced transport equation), solvability of
\eqref{eq:cohomological} is equivalent to solvability of
\eqref{eq:reduced-system} for every $(\ell,n)$, i.e.
\begin{equation}
\alpha(k)\,U_\ell\,F_{\ell,n}(R_0k) - F_{\ell,n}(k) = G_{\ell,n}(k),
\qquad k\in\mathbb{Z}^3,
\label{eq:transport-rewrite}
\end{equation}
where $\alpha(k):=e^{2\pi i (R_0 k)\cdot t}$ and $U_\ell=D^\ell(R_0)$.

\smallskip
\noindent\textbf{Step 1: Finite orbit reduction and the closure equation.}
Since $R_0$ has finite order, choose $p\ge 1$ such that $R_0^p=I$.
Fix a frequency $k\in\mathbb{Z}^3$ and denote its orbit by
\[
k_j := R_0^{\,j}k,\qquad j=0,\dots,p-1.
\]
For brevity write
\[
F_j := F_{\ell,n}(k_j),\qquad G_j := G_{\ell,n}(k_j),
\qquad \alpha_j := e^{2\pi i\,k_{j+1}\cdot t},
\]
where indices are taken modulo $p$.
Then \eqref{eq:transport-rewrite} restricted to this orbit becomes the cyclic
system
\begin{equation}
\alpha_j\,U_\ell\,F_{j+1} - F_j = G_j,
\qquad j=0,\dots,p-1.
\label{eq:cyclic-system}
\end{equation}

Rearrange \eqref{eq:cyclic-system} as
\begin{equation}
F_j = \alpha_j\,U_\ell\,F_{j+1} - G_j.
\label{eq:backward}
\end{equation}
Iterating \eqref{eq:backward} expresses $F_0$ in terms of $F_p$ and the forcing
$\{G_j\}$.
Indeed,
\begin{align*}
F_0
&= \alpha_0 U_\ell F_1 - G_0 \\
&= \alpha_0 U_\ell(\alpha_1 U_\ell F_2 - G_1) - G_0 \\
&= \alpha_0\alpha_1 U_\ell^2 F_2 - \alpha_0U_\ell G_1 - G_0\\
&\ \ \vdots \\
&= \Big(\prod_{j=0}^{p-1}\alpha_j\Big) U_\ell^{\,p} F_p
\;-\;
\sum_{r=0}^{p-1}\Big(\prod_{j=0}^{r-1}\alpha_j U_\ell\Big)G_r.
\end{align*}
Since $k_p = R_0^{\,p}k = k_0$, we have $F_p=F_0$.
Define the monodromy operator
\[
A_\ell(k) := \Big(\prod_{j=0}^{p-1}\alpha_j\Big) U_\ell^{\,p},
\]
and the twisted orbit-sum
\[
B_{\ell,n}(k) :=
\sum_{r=0}^{p-1}\Big(\prod_{j=0}^{r-1}\alpha_j U_\ell\Big)G_r,
\]
where the empty product (for $r=0$) is interpreted as the identity.
The previous identity becomes
\[
F_0 = A_\ell(k)F_0 - B_{\ell,n}(k),
\]
equivalently the closure equation
\begin{equation}
(A_\ell(k) - I)F_0 = B_{\ell,n}(k).
\label{eq:closure-eq}
\end{equation}

\smallskip
\noindent\textbf{Step 2: Solvability on an orbit is equivalent to solving the closure equation.}
We claim that the cyclic system \eqref{eq:cyclic-system} has a solution
$(F_0,\dots,F_{p-1})$ if and only if there exists $F_0\in\mathbb{C}^{2\ell+1}$
satisfying \eqref{eq:closure-eq}.

First, if $(F_0,\dots,F_{p-1})$ solves \eqref{eq:cyclic-system}, then the above
iteration is valid and necessarily yields \eqref{eq:closure-eq}.

Conversely, suppose $F_0$ satisfies \eqref{eq:closure-eq}.
We reconstruct the entire orbit uniquely from $F_0$ by solving forward:
from \eqref{eq:cyclic-system} we have
\[
\alpha_j U_\ell F_{j+1} = F_j + G_j,
\qquad\text{hence}\qquad
F_{j+1} = \alpha_j^{-1}U_\ell^{-1}(F_j+G_j).
\]
Since $\alpha_j\in\mathbb{S}^1$ and $U_\ell$ is unitary, $\alpha_j^{-1}U_\ell^{-1}$
is invertible, so $F_1,\dots,F_{p-1}$ are determined recursively.
The only remaining requirement is consistency after one full cycle, i.e.\ that
the constructed $F_p$ equals $F_0$.
But this consistency is exactly the closure equation \eqref{eq:closure-eq}.
Therefore \eqref{eq:cyclic-system} is solvable on the orbit of $k$ if and only if
\eqref{eq:closure-eq} is solvable.

Thus, solvability of the reduced transport system \eqref{eq:transport-rewrite}
for all $k$ is equivalent to solvability of \eqref{eq:closure-eq} for all orbits
and all $(\ell,n)$.

\smallskip
\noindent\textbf{Step 3: Linear algebra and the obstruction condition.}
Equation \eqref{eq:closure-eq} is a linear equation in the finite-dimensional
vector space $\mathbb{C}^{2\ell+1}$.
It has a solution if and only if
\[
B_{\ell,n}(k) \in \Range(A_\ell(k)-I),
\]
which proves the first equivalence in the theorem.

Finally, we show the standard orthogonality characterization.
For any linear operator $T$ on a finite-dimensional inner-product space,
\[
\Range(T) = (\Ker(T^*))^\perp.
\]
Apply this to $T=A_\ell(k)-I$. Then $T^* = A_\ell(k)^*-I$, and we obtain
\[
B_{\ell,n}(k) \in \Range(A_\ell(k)-I)
\quad\Longleftrightarrow\quad
\langle B_{\ell,n}(k), v \rangle = 0\ \ \text{for all } v\in\Ker(A_\ell(k)^*-I).
\]
This yields the equivalent obstruction formulation and completes the proof.
\end{proof}

\begin{remark}
The obstruction mechanism above arises from the interaction between
frequency transport and finite--dimensional representation mixing,
and reduces to a scalar resonance condition in the case of axial
screw motions.
\end{remark}

\section{An Explicit Screw Motion Along the $z$--Axis}
\label{sec:zaxis}

We conclude with an explicit example illustrating the obstruction mechanism
in a particularly transparent setting.
Let $R_0 \in \SO(3)$ be rotation by angle $\theta = 2\pi q/p$ about the $z$--axis,
where $p,q \in \mathbb{Z}$ are coprime, and let the translational component be
\[
t = (0,0,h) \in \mathbb{T}^3.
\]
Then $R_0^p = I$, and the induced action on frequency space decomposes into
finite orbits of length dividing $p$.

For any $k=(k_x,k_y,k_z)\in\mathbb{Z}^3$, the orbit $\{k_j=R_0^j k\}_{j=0}^{p-1}$
preserves the $z$--component, so that
\[
k_j \cdot t = k_z h
\qquad \text{for all } j.
\]
Consequently, the scalar phase factors in the monodromy operator satisfy
\[
\prod_{j=0}^{p-1} e^{2\pi i\, k_{j+1}\cdot t}
=
e^{2\pi i p k_z h}.
\]

Since $U_\ell = D^\ell(R_0)$ is unitary and $U_\ell^p = I$ for a rotation by
$2\pi q/p$, the monodromy operator defined in Section~\ref{sec:obstructions}
reduces in this case to the scalar operator
\[
A_\ell(k) = e^{2\pi i p k_z h}\, I
\]
on $\mathbb{C}^{2\ell+1}$.
The closure equation $(A_\ell(k)-I)F_0 = B_{\ell,n}(k)$ therefore decouples
componentwise and admits a solution if and only if
\[
e^{2\pi i p k_z h} \neq 1
\quad\text{or}\quad
B_{\ell,n}(k)=0.
\]

Equivalently, resonance occurs precisely when
\[
p k_z h \in \mathbb{Z},
\]
in which case the obstruction space coincides with the full representation
space $\mathbb{C}^{2\ell+1}$.
Outside this resonance set, the operator $A_\ell(k)-I$ is invertible and the
cohomological equation admits a unique solution along each frequency orbit.

This example shows that, for axial screw motions, solvability is governed
solely by the interaction between the translational pitch $h$ and the axial
Fourier frequency $k_z$, with no contribution from transverse frequencies.
The resulting resonance condition closely parallels the scalar small-divisor
phenomena appearing in toral translations and highlights the finite-dimensional
nature of obstructions in the present compact setting.

\section{Conclusion}

We have obtained a complete Fourier--representation-theoretic reduction of the cohomological equation for screw motions on $\SE(3)$.
In the compact, finite-order rotation setting, solvability reduces to explicit finite-dimensional obstructions analogous to those arising in nilmanifold dynamics.
The explicit screw motion along the $z$--axis demonstrates that resonances are governed solely by the translational pitch and axial frequencies. Extensions to irrational rotations and noncompact settings lead to
infinite-frequency cocycles and small-divisor phenomena and will be
addressed in future work.

\bibliographystyle{amsplain}
\bibliography{references}

\end{document}